\documentclass[12pt,psamsfonts]{amsart}
\usepackage{amsmath}
\usepackage{amsthm}
\usepackage{amssymb}
\usepackage{amscd}
\usepackage{amsfonts}
\usepackage{amsbsy}
\usepackage{color}
\usepackage{graphicx}
\usepackage{graphics}

\parskip 0.2cm

\newcommand{\R}{\ensuremath{\mathbb{R}}}
\newcommand{\T}{\ensuremath{\mathbb{T}}}
\newcommand{\Cc}{\ensuremath{\mathbb{C}}}

\newcommand{\X}{\mathcal{X}}
\newcommand{\Y}{\mathcal{Y}}

\def\p{\partial}
\def\e{\varepsilon}

\newtheorem {theorem} {Theorem}
\newtheorem {proposition} [theorem]{Proposition}

\newtheorem {lemma}  [theorem]{Lemma}

\begin{document}

\title[Polynomial vector fields on the Clifford torus]
{Polynomial vector fields\\ on the Clifford torus}

\author[J. Llibre and A.C. Murza]
{Jaume Llibre and Adrian C. Murza}
\address{Jaume Llibre, Departament de Matem\`atiques, Universitat
Aut\`onoma de Barcelona, 08193 Bellaterra, Barcelona, Catalonia,
Spain} \email{jllibre@mat.uab.cat}

\address{Adrian C. Murza, Institute of Mathematics ``Simion Stoilow''
of the Romanian Academy, Calea Grivi\c tei 21, 010702 Bucharest,
Romania} \email{adrian\_murza@hotmail.com}
\keywords{invariant parallels, invariant meridians, polynomial
vector field, Clifford torus}

\subjclass[2010]{Primary: 34C07, 34C05, 34C40}

\begin{abstract}
First we characterize all the polynomial vector fields in $\R^4$
which have the Clifford torus as an invariant surface. After we
study the number of invariant meridians and parallels that such
polynomial vector fields can have in function of the degree of these
vector fields.
\end{abstract}

\maketitle

\section{Introduction and statement of the main results}

The Clifford torus
\begin{equation*}
\T=\left\{(x_1,x_2,x_3,x_4)\in\R^4:\,\, x_1^2+x_2^2=\frac{1}{2},\,\,
x_3^2+x_4^2=\frac{1}{2}\right\}
\end{equation*}
in geometric topology is the simplest and most symmetric Euclidean
space embedding of the cartesian product of two circles. It lives in
$\R^4$, as opposed to $\R^3$.

In MathSciNet at July 22 of 2017 it appears with the keyword
``Clifford torus'' 430 references. The more recent reference is
\cite{LW}. In the reference \cite{DT} are studied the meridians of
the surfaces of revolution and some information about the meridians
of the Clifford torus can be found there. In the references
\cite{BBS, ZZ} are studied the parallels of the surfaces of
revolutions and again contains some information on the parallels of
the Clifford torus.

In this paper first we shall study the polynomial vector fields of
arbitrary degree in $\R^4$ having the Clifford torus invariant by
their flow, and after we shall compute the maximal number of
parallels and meridians that a polynomial vector field of a given
degree can exhibit on the Clifford torus.

The maximum number of invariant hyperplanes that a polynomial vector
field in $\R^n$ can have in function of its degree was given in
\cite{LM}. The analogous result for the invariant straight lines of
polynomial vector fields in $\R^2$ was provided before in
\cite{AGL}. The study of the maximum number of meridians and
parallels for a torus in $\R^3$ were studied in \cite{LM1}, and for
an algebraic torus in \cite{LR}. In surfaces of revolution in $\R^3$
the meridians and parallels invariant by polynomial vector fields
have been studied in \cite{DLM}.

As usual we denote by $\R[x_1,x_2,x_3,x_4]$ the ring of the
polynomials in the variables $x_1,x_2,x_3$ and $x_4$ with real
coefficients. By definition a {\it polynomial differential system
in} $\R^4$ is a system of the form
\begin{equation}\label{e1}
\frac{dx_i}{dt}=P_i(x_1,x_2,x_3,x_4), \quad \mbox{for $i=1,2,3,4$},
\end{equation}
where $P_i(x_1,x_2,x_3,x_4)\in\R[x_1,x_2,x_3,x_4]$. If $m_i$ is the
degree of the polynomial $P_i$, then $m =
\mathrm{max}\{m_1,m_2,m_3,m_4\}$ is the degree of the polynomial
differential system \eqref{e1}.

We denote by
\begin{equation}\label{e2}
\X = \sum_{i=1}^4 P_i(x_1,x_2,x_3,x_4)\frac{\partial}{\partial x_i},
\end{equation}
the {\it polynomial vector field} associated to the differential
polynomial system \eqref{e1} of degree $m.$

An {\it invariant algebraic hypersurface} for the polynomial
differential system \eqref{e1} or for the polynomial vector field
\eqref{e2} is an algebraic surface $f=f(x_1,x_2,x_3,x_4) = 0$ with
$f\in\R[x_1,x_2,x_3,x_4],$ such that for some polynomial
$K\in\R[x_1,x_2,x_3,x_4]$ we have
\begin{equation}\label{e21}
\X f = Kf.
\end{equation}
Therefore if a solution curve of system \eqref{e1} has a point on
the algebraic hypersurface $f = 0,$ then the whole solution curve is
contained in $f = 0.$ The polynomial $K$ is called the {\it
cofactor} of the invariant algebraic hypersurface $f = 0.$ We remark
that if the polynomial system has degree $m,$ then any cofactor has
at most degree $m-1.$

If $f=0$ and $g=0$ are two invariant algebraic hypersurfaces by the
polynomial vector field $\X$, then $S=\{f=0\}\cap \{g=0\}$ is an
{\it invariant algebraic surface} by the vector field $\X$, i.e. if
an orbit of $\X$ has a point on the algebraic surface $S$, then the
whole orbit is contained in $S$.

In the next theorem we characterize all the polynomial differential
systems having the Clifford torus $\T$ as an invariant algebraic
surface.

\begin{theorem}\label{t1}
If the polynomial differential system \eqref{e1} has the Clifford
torus $\T$ invariant, then
\begin{equation}\label{e3}
\begin{array}{l}
P_1= A(x_1^2+x_2^2-1/2)-2Cx_2, \vspace{0.2cm}\\
P_2= B(x_1^2+x_2^2-1/2)+2Cx_1, \vspace{0.2cm}\\
P_3= D(x_3^2+x_4^2-1/2)-2Fx_4, \vspace{0.2cm}\\
P_4= E(x_3^2+x_4^2-1/2)+2Fx_3,
\end{array}
\end{equation}
where $A$, $B$, $C$, $D$, $E$ and $F$ are arbitrary polynomials in
the variables $x_1$, $x_2$, $x_3$ and $x_4$.
\end{theorem}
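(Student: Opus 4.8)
The plan is to realize the Clifford torus as $\T=\{f_1=0\}\cap\{f_2=0\}$, with $f_1=x_1^2+x_2^2-1/2$ and $f_2=x_3^2+x_4^2-1/2$, and to use the invariance of $\T$ in the form that $f_1=0$ and $f_2=0$ are invariant algebraic hypersurfaces of the vector field \eqref{e2}; thus there are polynomials $K_1,K_2\in\R[x_1,x_2,x_3,x_4]$ with $\X f_1=K_1f_1$ and $\X f_2=K_2f_2$ in the sense of \eqref{e21}. Since $\X f_1=2(x_1P_1+x_2P_2)$ and $\X f_2=2(x_3P_3+x_4P_4)$, these two relations say precisely that $x_1P_1+x_2P_2$ is a polynomial multiple of $f_1$ and that $x_3P_3+x_4P_4$ is a polynomial multiple of $f_2$. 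The two blocks of variables $(x_1,x_2)$ and $(x_3,x_4)$ enter symmetrically, so it is enough to read off the form of $P_1,P_2$ from the first relation and then copy the argument for $P_3,P_4$.

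The device I would use for that is the Bézout-type identity carried by a circle: modulo $f_1$ one has $2x_1^2+2x_2^2=1$, equivalently $x_1^2\equiv1/2-x_2^2$ and $x_2^2\equiv1/2-x_1^2$. Setting $C:=x_1P_2-x_2P_1$, a one-line manipulation gives, modulo $f_1$, $P_1+2Cx_2\equiv2x_1(x_1P_1+x_2P_2)$ and $P_2-2Cx_1\equiv2x_2(x_1P_1+x_2P_2)$; since the right-hand sides are multiples of $f_1$ by hypothesis, there are polynomials $A,B$ with $P_1+2Cx_2=Af_1$ and $P_2-2Cx_1=Bf_1$, which are exactly the first two equations of \eqref{e3}. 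Concretely one may take $A=x_1K_1-2P_1$ and $B=x_2K_1-2P_2$, so the derivation is effective and uses nothing about the degree of \eqref{e1}. Running the identical computation on $\X f_2=K_2f_2$, with $F:=x_3P_4-x_4P_3$, yields polynomials $D,E$ with $P_3=Df_2-2Fx_4$ and $P_4=Ef_2+2Fx_3$, which is the rest of \eqref{e3}. Finally I would record the converse: substituting \eqref{e3} into \eqref{e2} gives $\X f_1=2(x_1A+x_2B)f_1$ and $\X f_2=2(x_3D+x_4E)f_2$, so both cylinders $f_1=0$ and $f_2=0$, and hence $\T$, are invariant.

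I do not expect a real obstacle here: once the identity $2x_1^2+2x_2^2=1$ on the circle is in hand, everything reduces to elementary manipulations of polynomials. The one point that needs attention is the bookkeeping behind the choice of $C$ — one must make sure that clearing $f_1$ out of $P_1$ and of $P_2$ leaves an honest multiple of $f_1$, and not merely an element of the ideal $(f_1,f_2)$, because this is what rules out spurious $f_2$-terms and pins down the precise shape \eqref{e3}. This is also exactly where the hypothesis enters through the strong assertion that each of the two hypersurfaces $f_1=0$ and $f_2=0$, and not just the surface $\T$, is invariant.
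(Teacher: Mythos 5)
Your proof is correct, but it follows a genuinely different route from the paper. The paper deduces the form of $P_1,P_2$ (and symmetrically $P_3,P_4$) by invoking Lemma~6 of \cite{CLPZ}, which gives the normal form $P=Af-Cf_y$, $Q=Bf+Cf_x$ for a planar system with a nonsingular invariant curve $f=0$ satisfying $(f_x,f_y)=1$; it then applies this with $f=x_1^2+x_2^2-1/2$, allowing the coefficient polynomials to depend on all four variables. Your argument instead proves the needed statement from scratch: writing $\X f_1=2(x_1P_1+x_2P_2)=K_1f_1$, setting $C=x_1P_2-x_2P_1$, and using $1-2x_2^2=2x_1^2-2f_1$, one gets the exact identities $P_1+2Cx_2=(x_1K_1-2P_1)f_1$ and $P_2-2Cx_1=(x_2K_1-2P_2)f_1$, which I have checked and which yield \eqref{e3} with explicit $A,B,C$ (and likewise $D,E,F$). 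What your approach buys is threefold: it is self-contained and effective (explicit cofactor-based formulas for $A,\dots,F$); it needs no nonsingularity or coprimality hypotheses, only the specific circle identity; and it silently repairs a small looseness in the paper, namely that Lemma~\ref{L1} is stated for polynomials in two variables while the application requires coefficients in $\R[x_1,x_2,x_3,x_4]$ --- your computation works verbatim in four variables since $x_3,x_4$ enter only as parameters. Your closing remarks are also on point: the displayed identities show the multiples of $f_1$ are honest multiples, not merely elements of the ideal $(f_1,f_2)$, and the hypothesis is indeed used in the strong form that each of the two quadric hypersurfaces is separately invariant, which is exactly how the paper defines invariance of $\T$. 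The converse verification $\X f_1=2(x_1A+x_2B)f_1$ is a worthwhile addition that the paper omits.
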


Theorem \ref{t1} is proved in section \ref{s2}.

For all $(a,b)\in \R^2$ such that $a^2+b^2=1/2$ a {\it meridian} of
the Clifford torus $\T$ is
$$
M_{(a,b)}= \{(x_1,x_2,a,b)\in \R^4 : \, x_1^2+x_2^2=1/2\},
$$
and a {\it parallel} is
$$
P_{(a,b)}= \{(a,b,x_3,x_4)\in \R^4 : \,x_3^2+x_4^2=1/2\}.
$$
So a meridian is defined by the three equations $x_1^2+x_2^2=1/2$,
$x_3=a$, $x_4=b$ with $a^2+b^2=1/2$, and similarly by a parallel.

In the next theorem we provide the maximum number of invariant
meridians or parallels that a polynomial vector field $\X$ on $\T$
can have in function of its degree. See section \ref{s3} for the
definition of the multiplicity of a meridian and of a parallel.

\begin{theorem}\label{t2}
Let $\X$ be a polynomial vector field on the Clifford torus $\T$ of
degree ${\bf m}=(m_1,m_2,m_3,m_{4})$ with $m_1\geqslant m_2\geqslant
m_3\geqslant m_{4}>0.$
\begin{itemize}
\item[(a)] The number of invariant meridians of $\mathcal{X}$ is at most
$m_4-2$ taking into account their multiplicities if $m_4>3$, and $4$
if $m_4=3$. These upper bounds are reached.

\smallskip

\item[(b)] The number of invariant parallels of $\mathcal{X}$ is at most
$m_2-2$ taking into account their multiplicities if $m_2>3$, and $4$
if $m_2=3$. These upper bounds are reached.
\end{itemize}
\end{theorem}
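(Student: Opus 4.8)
The plan is to transfer everything to the induced dynamics on $\T$, where the count becomes a divisibility question for two Laurent polynomials, one governing the meridians and one the parallels. By Theorem~\ref{t1} we may put $\X$ in the form \eqref{e3}. Parametrize $\T$ by $(\theta,\varphi)$ via $x_1=\tfrac{1}{\sqrt2}\cos\theta$, $x_2=\tfrac{1}{\sqrt2}\sin\theta$, $x_3=\tfrac{1}{\sqrt2}\cos\varphi$, $x_4=\tfrac{1}{\sqrt2}\sin\varphi$. On $\T$ the summands carrying $A,B,D,E$ in \eqref{e3} vanish, so the induced vector field on $\T$ is
\begin{equation*}
\dot\theta=2\,C|_{\T}(\theta,\varphi),\qquad \dot\varphi=2\,F|_{\T}(\theta,\varphi),
\end{equation*}
and $C|_{\T},F|_{\T}$ are well defined, the ambiguity of $C,F$ modulo $x_1^2+x_2^2-\tfrac12$ and $x_3^2+x_4^2-\tfrac12$ disappearing on $\T$. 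In the coordinates $u=e^{i\theta}$, $v=e^{i\varphi}$ these restrictions are Laurent polynomials, and I will use the identities $P_2|_{\T}=\sqrt2\,(C|_{\T})\cos\theta$, $P_1|_{\T}=-\sqrt2\,(C|_{\T})\sin\theta$, $P_4|_{\T}=\sqrt2\,(F|_{\T})\cos\varphi$, $P_3|_{\T}=-\sqrt2\,(F|_{\T})\sin\varphi$.

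Next I would rephrase invariance. The meridian with label $(a,b)=\tfrac1{\sqrt2}(\cos\varphi_0,\sin\varphi_0)$ is the circle $\{\varphi=\varphi_0\}\subset\T$; $\X$ is tangent to it iff $P_3=P_4=0$ along it, and since $(a,b)\ne(0,0)$ this is equivalent to $F|_{\T}(\theta,\varphi_0)=0$ for every $\theta$, i.e.\ to $(v-e^{i\varphi_0})$ dividing $F|_{\T}$ in $\Cc[u,u^{-1}][v]$. Hence the invariant meridians are in bijection with the unimodular linear factors of $F|_{\T}$ viewed as a polynomial in $v$, their multiplicities (Section~\ref{s3}) being the multiplicities of those factors; symmetrically the invariant parallels correspond to the unimodular linear factors of $C|_{\T}$ in $u$. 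We may assume $F|_{\T}\not\equiv0$ and $C|_{\T}\not\equiv0$, since otherwise all meridians, respectively all parallels, are invariant. It then suffices to bound the $v$-degree of $F|_{\T}$ for part~(a) and, by the identical argument, the $u$-degree of $C|_{\T}$ for part~(b).

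For the bound, the identities above, together with the elementary fact that multiplying a trigonometric polynomial by $\cos$ or $\sin$ of its variable cannot cancel its extreme Fourier modes, show that $F|_{\T}$ is supported on $\{\,|j|+|k|\le m_4-1\,\}$ in the $u^jv^k$-lattice; hence $v^{\,m_4-1}F|_{\T}$ is an honest polynomial in $v$ of degree at most $2(m_4-1)$, which already yields the value $4$ in the borderline case $m_4=3$. When $m_4>3$ this estimate must be improved to $m_4-2$: writing $F|_{\T}=\sum_j \phi_j(v)\,u^j$, a unimodular common zero of all the $\phi_j$ must be a zero of the outermost $\phi_j$ that is not identically zero, whose $v$-width is correspondingly small, and bounding how many of the extreme $\phi_j$ can vanish identically — which is limited because $F|_{\T}$ genuinely descends from a polynomial of degree $m_4$ — gives the claim; part~(b) follows by replacing $(F,m_4)$ with $(C,m_2)$. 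Sharpness is obtained by choosing $F$ (resp.\ $C$) to be a Chebyshev polynomial in $x_3$ (resp.\ $x_1$), so that $F|_{\T}$ (resp.\ $C|_{\T}$) is a prescribed one-variable trigonometric polynomial with the maximal number of simple unimodular zeros, and then fixing $A,B,C,D,E,F$ — adding inessential multiples of the two surface equations if necessary — so that the degree vector of $\X$ is exactly $(m_1,m_2,m_3,m_4)$ with $m_1\ge m_2\ge m_3\ge m_4$.

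The point I expect to be the main obstacle is exactly this last sharpening: passing from the Newton-polygon estimate $2(m_4-1)$ to the optimal $m_4-2$ for $m_4>3$, compatibly with the multiplicity convention of Section~\ref{s3}, and isolating why $m_4=3$ is genuinely exceptional. Verifying that the extremal examples have degree vector \emph{exactly} $(m_1,m_2,m_3,m_4)$, and admit no lower-degree polynomial representative on $\T$, is a shorter but still necessary check.
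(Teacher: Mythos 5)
Your reduction to the induced flow $\dot\theta=2C|_{\T}$, $\dot\varphi=2F|_{\T}$ is correct, but it answers a different question from the one the paper poses, and the mismatch is fatal to the count. In this paper an invariant meridian $M_{(a,b)}$ is, by the definition of multiplicity in Section \ref{s3} and by the proof itself, a meridian for which \emph{both hyperplanes} $x_3-a=0$ and $x_4-b=0$ are invariant algebraic hypersurfaces of the system in $\R^4$; equivalently $x_3-a$ divides $P_3$ \emph{and} $x_4-b$ divides $P_4$. Your condition --- $F|_{\T}(\theta,\varphi_0)\equiv 0$, i.e.\ the circle is invariant as a subset of $\T$ --- is strictly weaker, and under it the claimed bound is simply false. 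Take $A=B=D=E=0$, $C$ of large degree, and $F=\prod_{i=1}^{d}(x_3-a_i)$ with distinct $a_i\in(-1/\sqrt{2},1/\sqrt{2})$; then $P_3=-2Fx_4$ and $P_4=2Fx_3$ have degree $m_4=d+1$, and each of the $2d=2(m_4-1)$ meridians $M_{(a_i,\pm\sqrt{1/2-a_i^2})}$ is invariant as a curve, since $P_3=P_4=0$ along it. This is essentially your own ``Chebyshev'' sharpness construction, and it shows that your Newton-polygon estimate $2(m_4-1)$ is already optimal for set-invariance: the ``sharpening to $m_4-2$'' that you defer as the main obstacle is not a missing computation but an impossibility in your framework. (None of these $2(m_4-1)$ meridians counts for the paper, because $x_4-b$ does not divide $P_4=2Fx_3$.)

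The bound $m_4-2$ in the paper comes exactly from the constraint you drop: requiring $x_4-b_i$ to divide $P_4=E(x_3^2+x_4^2-1/2)+2Fx_3$ forces, outside the exceptional configuration $D=2kx_3$, $E=-2kx_4$, $F=kx_3x_4$ (which produces the four meridians of the case $m_4=3$), the choice $F=0$ and $E=k_4\prod_i(x_4-b_i)$, and $E$ has degree at most $m_4-2$. So to salvage the angular approach you must encode both divisibility conditions in $\R^4$, not merely the tangency of $\X$ to the circle, and you must also reconcile your notion of multiplicity (order of the factor $v-e^{i\varphi_0}$ in $F|_{\T}$) with the paper's (minimum of the extactic multiplicities of the two hyperplanes), which the proposal does not address. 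As written, the argument establishes neither the upper bound for $m_4>3$ nor the correct identification of the exceptional case $m_4=3$.
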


Theorem \ref{t2} is proved in section \ref{s3}.

\section{Proof of Theorem \ref{t1}}\label{s2}

For proving Theorem \ref{t1} we shall need some definitions and
results for the polynomial differential systems in $\R^2$.

Consider the following polynomial differential system in $\R^2$
\begin{equation}\label{e4}
\begin{array}{l}
\dot x=P(x,y), \vspace{0.2cm}\\
\dot y=Q(x,y),
\end{array}
\end{equation}
and let
$$
\Y= P(x,y)\frac{\p}{\p x}+ Q(x,y)\frac{\p}{\p y},
$$
its associated polynomial vector field.

Let $f(x,y)$ a polynomial. Then the algebraic curve $f(x,y)=0$ is
{\it invariant} by system \eqref{e4} if there exists a polynomial
$k=k(x,y)$ such that $\Y f=kf$.

The next result is proved in Lemma 6 of \cite{CLPZ}.

\begin{lemma}\label{L1}
Assume that the polynomial system \eqref{e4} has an invariant
algebraic curve $f(x,y)=0$ without singularities (i.e. there are no
points at which $f$ and its first derivatives are all vanish). If
$(f_x,f_y)=1$ (i.e. the polynomials $f_x$ and $f_y$ has no common
factors), then
\begin{equation*}
\begin{array}{l}
P= Af-Cf_y,\vspace{0.2cm}\\
Q=Bf+Df_x,
\end{array}
\end{equation*}
where $A,B$ and $C$ are arbitrary polynomials in the variables $x$
and $y$.
\end{lemma}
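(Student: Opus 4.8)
The plan is to exploit the invariance identity $\Y f = Pf_x+Qf_y = kf$ and to split $P$ and $Q$ along $f$ and the two components of the gradient $(f_x,f_y)$. The crucial first step is to show that the cofactor satisfies $k\in(f_x,f_y)$. Since $f=0$ has no singular points, the polynomials $f$, $f_x$ and $f_y$ have no common zero (in $\Cc^2$), so by Hilbert's Nullstellensatz there exist polynomials $a,b,c$ with $af+bf_x+cf_y=1$. Multiplying this identity by $k$ and replacing $kf$ by $Pf_x+Qf_y$ gives
\[
k = a\,kf + kb\,f_x + kc\,f_y = (aP+kb)f_x + (aQ+kc)f_y ,
\]
so that $k=\alpha f_x+\beta f_y$ with $\alpha=aP+kb$ and $\beta=aQ+kc$.

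I would then substitute this expression for $k$ back into the invariance identity. Setting $\tilde P=P-\alpha f$ and $\tilde Q=Q-\beta f$, the relation $Pf_x+Qf_y=(\alpha f_x+\beta f_y)f$ becomes
\[
\tilde P\,f_x + \tilde Q\,f_y = 0 ,
\]
that is $\tilde P f_x=-\tilde Q f_y$. Here the hypothesis $(f_x,f_y)=1$ enters: in the unique factorization domain $\R[x,y]$ the polynomial $f_y$ divides $\tilde P f_x$ and is coprime to $f_x$, hence $f_y\mid\tilde P$. Writing $\tilde P=-Cf_y$ and cancelling the nonzero factor $f_y$ in the resulting equality $\tilde Q f_y=Cf_xf_y$ yields $\tilde Q=Cf_x$. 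Unwinding the notation gives precisely
\[
P = \alpha f - Cf_y, \qquad Q = \beta f + Cf_x ,
\]
which is the asserted form with $A=\alpha$, $B=\beta$ and $D=C$. (If $f_x$ or $f_y$ vanishes identically, then $(f_x,f_y)=1$ forces the other to be a nonzero constant, and the decomposition is immediate.)

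I expect the main obstacle to be exactly the first step, the membership $k\in(f_x,f_y)$: this is where nonsingularity is genuinely used, through the Nullstellensatz, whereas the coprimality of $f_x$ and $f_y$ is used afterwards in the divisibility argument. Once $k$ is expressed as a combination of $f_x$ and $f_y$, the remainder is a short manipulation in $\R[x,y]$. A secondary point to watch is the field over which ``no singular points'' is read; to apply the Nullstellensatz I interpret it as the absence of common zeros of $f,f_x,f_y$ in $\Cc^2$, and one checks that the combination $af+bf_x+cf_y=1$ can be taken with real coefficients since $f,f_x,f_y\in\R[x,y]$.
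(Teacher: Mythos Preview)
The paper does not actually prove Lemma~\ref{L1}; it only quotes the result as Lemma~6 of \cite{CLPZ}. So there is no in-paper proof to compare against.

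Your argument is correct and is essentially the standard one. The key step---showing that the cofactor $k$ lies in the ideal $(f_x,f_y)$---is handled cleanly via the Nullstellensatz identity $af+bf_x+cf_y=1$, and the subsequent divisibility argument using $(f_x,f_y)=1$ is routine. Your remark that the Bezout-type identity can be taken with real coefficients (by taking real parts of a complex identity) is correct, and your caveat about interpreting ``no singular points'' over $\Cc^2$ rather than $\R^2$ is exactly the right caution: the Nullstellensatz step genuinely requires the absence of \emph{complex} common zeros of $f,f_x,f_y$. For the application in this paper (to $f=x_1^2+x_2^2-1/2$, where $f_x=2x_1$ and $f_y=2x_2$ are coprime and $f,f_x,f_y$ have no common complex zero) both hypotheses are satisfied, so the lemma applies without difficulty.
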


\begin{proof}[Proof of Theorem \ref{t1}]
We consider polynomial vector fields $\X$ given in \eqref{e2} of
degree $m$ in $\R^4$ having the Clifford torus $\T$ as an {\it
invariant algebraic surface}, i.e. both hypersurfaces
$x_1^2+x_2^2=1/2$ and $x_3^2+x_4^2=1/2$ are invariant by $\X$.

Let $f= x_1^2+x_2^2-1/2=0$ and $g= x_3^2+x_4^2-1/2=0$. By Lemma
\ref{L1} and from the definition of the invariant algebraic
hypersurface $f=0$ given in \eqref{e21} it follows that
\begin{equation}\label{e5}
\begin{array}{l}
P_1= A(x_1^2+x_2^2-1/2)-2Cx_2, \vspace{0.2cm}\\
P_2= B(x_1^2+x_2^2-1/2)+2Cx_1,
\end{array}
\end{equation}
where $A$, $B$ and $C$ arbitrary polynomials in the variables $x_1$,
$x_2$, $x_3$ and $x_4$. In a similar way, from Lemma \ref{L1} and
the definition of the invariant algebraic hypersurface $g=0$ we get
\begin{equation}\label{e6}
\begin{array}{l}
P_3= D(x_3^2+x_4^2-1/2)-2Fx_4, \vspace{0.2cm}\\
P_4= E(x_3^2+x_4^2-1/2)+2Fx_3,
\end{array}
\end{equation}
where $D$, $E$ and $F$ arbitrary polynomials in the variables $x_1$,
$x_2$, $x_3$ and $x_4$. In short from \eqref{e5} and \eqref{e6} the
theorem follows.
\end{proof}

\section{Proof of Theorem \ref{t2}}\label{s3}

One of the best tools for working with invariant algebraic
hypersurfaces is the {\it extactic polynomial of $\X$ associated to
a finite vector space of polynomials generated by $W$}. To our
knowledge the extactic polynomial was introduced by Lagutinskii, see
\cite{SW}. We recall its definition for a polynomial vector field in
$\R^4$. Let $W$ be a finitely generated vector subspace of the
vector space $\Cc[x_1,x_2,x_3,x_d]$ generated by the basis
$\{v_{1}$, \ldots, $v_{l}\}$. The {\it extactic polynomial of $\X$
associated to $W$} is
\begin{equation*}
{\mathcal E}_W(\X)={\mathcal E}_{\{v_{1}, \ldots, v_{l}\}}(\X) =
\det\left(
\begin{array}{cccc}
v_{1} & v_{2} & \ldots & v_{l} \\
\X(v_{1}) & \X(v_{2}) & \ldots & \X(v_{l}) \\
\vdots & \vdots & \ldots & \vdots \\
\X^{l-1}(v_{1}) & \X^{l-1}(v_{2}) & \ldots & \X^{l-1}(v_{l})
\end{array}
\right),
\end{equation*}
where $\X^{j}(v_{i})= \X^{j-1}(\X(v_{i}))$. The extactic polynomial
does not dependent of the chosen basis of $W$.

The extactic polynomial ${\mathcal E}_{W}(\X)$ has two good
properties. First, it allows to detect invarian algebraic
hypersurfaces $f=0$ with $f\in W$ by the polynomial vector field
$\X$, see the following proposition proved in \cite{CLP}. Second, it
allows to compute the multiplicity of the invariant algebraic
hypersurfaces.

Even if the next proposition is stated for complex polynomial vector
fields, it is very useful for our later considerations. This is so,
because we deal with real polynomial vector fields, which are
particular cases of complex ones.

\begin{proposition}\label{p1}
Let $\X$ be a polynomial vector field in $\Cc^4$ and let $W$ be a
finitely generated vector subspace of $\Cc[x_1,x_2,x_3,x_4]$ with
$\dim (W)>1 $. Then every algebraic invariant hypersurface $f=0$ for
the vector field $\X$, with $f\in W$, is a factor of the polynomial
${\mathcal E}_{W}(\X)$.
\end{proposition}

{From} Proposition \ref{p1} it follows that $f=0$ is an invariant
hyperplane of the polynomial vector field $\X$ if the polynomial $f$
is a factor of the polynomial ${\mathcal E}_{W}(\X)$, where $W$ is
generated by $\{1,x_1,x_2,x_3,x_4\}$.

{From} \cite{CLP} the invariant hypersurface $f=0$, with $f\in W$,
has {\it multiplicity} $k$ if $k$ is the greatest positive integer
such that $f^k$ divides the polynomial ${\mathcal E}_{W}(\X)$. In
\cite{CLP} it is proved that if we have that $f=0$ is an invariante
hypersurface of multiplicity $k$, then in a neighborhood of $\X$ in
the topology of the coefficients there are polynomial vector fields
$\Y_\varepsilon$, being $\e$ a small parameter, having $k$ invariant
algebraic hypersurfaces such that all of them tend to the
hypersurface $f=0$ when $\e\to 0$.

We say that the meridian $M_{(a_i,b_i)}$ with $a_i^2+b_i^2=1/2$ has
{\it multiplicity} $k$ if both invariant hyperplanes $x_3-a_i=0$ and
$x_4-b_i=0$ of the differential system \eqref{e1} with the
polynomials $P_i$ given by \eqref{e3} have multiplicities $k_1$ and
$k_2$ respectively, and $\min\{k_1,k_2\}=k$. In a similar way is
defined the multiplicity of a parallel $P_{(a_i,b_i)}$.

\begin{proof}[Proof of Theorem \ref{t2}]
A meridian of the Clifford torus $\T$ is obtained by intersecting
$\T$ with the hyperplanes $x_3=a$ and $x_4=b$ with $a^2+b^2=1/2$. So
the hyperplanes $x_3-a=0$ and $x_4-b=0$ must be invariant by the
polynomial vector field $\X$. In other words $x_3-a$ must divide the
extactic polynomial
\begin{equation*}
{\mathcal E}_{1,x_3}(\X)= \left|
\begin{array}{cccc}
1 & x_3 \\
0 & P_3
\end{array}
\right|=P_3,
\end{equation*}
i.e. $x_3-a$ must divide the polynomial $P_3(x_1,x_2,x_3,x_4)$. In a
similar way $x_4-b$ must divide the polynomial $P_4(x_1,x_2,x_3,
x_4)$. Since the degrees of $P_i$ is $m_i$ for $i=3,4$, it follows
that the polynomials $x_3-a_i$ at most divide $m_3$ times the
polynomial $P_3$. This is only possible if
\begin{equation}\label{f1}
P_3=k_3\prod_{i=1}^{m_3} (x_3-a_i), \quad \mbox{and} \quad
P_4=k_4\prod_{i=1}^{m_4} (x_4-b_i),
\end{equation}
with $k_3,k_4\in \R\setminus \{0\}$.  But taking into account the
expressions of $P_3$ and $P_4$ given in \eqref{e3} we only can
obtain the expressions of \eqref{f1} if and only if
\begin{equation}\label{f2}
D=2k x_3, \quad E=-2k x_4, \quad \mbox{and} \quad F=k x_3 x_4,
\end{equation}
with $k\in \R\setminus \{0\}$. Then
\[
\begin{array}{l}
P_3= 2k x_3 ( x_3^2 + x_4^2-1/2) -2k x_3 x_4^2= 2k x_3 (x_3^2-1/2),
\vspace{0.2cm}\\
P_4= -2k x_4 ( x_3^2 + x_4^2-1/2) +2k x_3^2 x_4= 2k x_4 (x_4^2-1/2).
\end{array}
\]
So in this case $m_3=m_4=3$ and we have $4$ meridians, namely
$M_{(0,1/\sqrt{2})}$, $M_{(0,-1/\sqrt{2})}$, $M_{(1/\sqrt{2},0)}$
and $M_{(-1/\sqrt{2},0)}$.

Expect this exceptional case \eqref{f2} that we can obtain for the
polynomials $P_3$ and $P_4$ the expressions \eqref{f1}, we have that
there are at most $m_3-2$ invariant hyperplanes of the form
$x_3-b_i$ taking into account their multiplicities, and at most we
have $m_4-2$ invariant hyperplanes of the form $x_4-b_i$ taking into
account their multiplicities. These invariants hyperplanes are
obtained when
\[
D=k_3\prod_{i=1}^{m_3-2}(x_3-a_i),\quad
E=k_4\prod_{i^1}^{m_4-2}(x_4-b_i), \quad F=0,
\]
with $k_3,k_4\in \R\setminus \{0\}$. Therefore the differential
system \eqref{e1} with the polynomials $P_i$ given by \eqref{e3} has
the invariant meridians $M_{(a_i,b_i)}$ if $a_i^2+b_i^2=1/2$ for
$i=1,\ldots,m_4-2$. Eventually some of these invariant meridians can
have multiplicity larger than one if the $a_i$ and $b_i$ appears
repeated in the expressions of the polynomials $D$ and $E$.

In short, if $m_4>3$ an upper bound for the maximum number of
invariant meridians is $m_4-2$ taking into account their
multiplicities, because $m_3\ge m_4$; or if $m_4=3$ that upper bound
is $4$. Note from this proof that these upper bounds are reached.
This proves statement $(a)$ of Theorem \ref{t2}.

In an analogous way we obtain that the maximum number of invariant
parallels is $m_2-2$ taking into account their multiplicities if
$m_2>3$, or $4$ if $m_2=3$. This proves statement $(b)$ of Theorem
\ref{t2}.
\end{proof}

\section*{Acknowledgements}

The first author is partially supported by a FEDER-MINECO grant
MTM2016-77278-P, a MINECO grant MTM2013-40998-P, and an AGAUR grant
2014SGR-568. The second author acknowledges support from a grant of
the Romanian National Authority for Scientific Research and
Innovation, CNCS-UEFISCDI, project number PN-II-RU-TE-2014-4-0657.

\end{document}